\newtheorem{theorem}{Theorem}
\newtheorem{lemma}[theorem]{Lemma}
\newtheorem{proposition}[theorem]{Proposition}
\newtheorem{conjecture}[theorem]{Conjecture}
\theoremstyle{definition}
\newtheorem*{remark}{Remark}
\newtheorem*{acknowledgement}{Acknowledgement}
\newtheorem{definition}[theorem]{Definition}
\title[Some examples of noncompact nonpositively curved manifolds]
{Examples of noncompact nonpositively curved manifolds}
\author{Grigori Avramidi, T. T$\hat{\mathrm{a}}$m Nguy$\tilde{\hat{\mathrm{e}}}$n-Phan}
\address{Mathematische Institut\\Universit\"at M\"unster\\ Germany}
\email{avramidi@uni-muenster.de}
\address{Max Planck Institute for Mathematics\\
Bonn\\
Germany}
\email{tam@mpim-bonn.mpg.de}
\def\ra{\rightarrow}
\def\beqa{\begin{eqnarray}}
\def\eeqa{\end{eqnarray}}
\def\beqa{\begin{eqnarray}}
\def\eeqa{\end{eqnarray}}
\DeclareMathOperator{\id}{Id}
\DeclareMathOperator{\funnel}{funnel}
\def\R{\mathbb{R}}
\def\Q{\mathbb{Q}}
\begin{document}

\begin{abstract}
We give a simple construction of new, complete, finite volume manifolds $M$ of bounded, nonpositive curvature. These manifolds have ends that look like a mixture of locally symmetric ends of different ranks and their fundamental groups are not duality groups. 

\end{abstract}
\maketitle

\section{Introduction}
The goal of this paper is to give a very simple construction of complete, finite volume, tame\footnote{``Tame" means that the manifold is homeomorphic to the interior of a compact manifold-with-boundary.} $n$-manifolds $M$ of bounded, nonpositive curvature. The manifolds obtained have interesting properties. For instance, the large scale geometry of their ends is a mixture of different types and their fundamental groups are not duality groups, in contrast with the typical examples of nonpositively curved manifolds such as locally symmetric spaces of noncompact type. If $M$ is a locally symmetric manifold of noncompact type, then from  a large-scale point of view $M$ looks like a union of flat $r$-dimensional sectors, where $r$ is the $\Q$-rank of $M$. So for (arithmetic\footnote{All irreducible higher rank locally symmetric spaces are arithmetic by Margulis' arithmeticity theorem (\cite{zimmer}).}) locally symmetric spaces, their large-scale geometry is determined by the rational structures of the spaces. 
Moreover, the fundamental group of $M$ is a duality group, or in other words, the lift of the end of $M$ to the universal cover $\widetilde{M}$ has homology concentrated in one dimension. This is a consequence of the fact that it is homotopy equivalent to the rational Tits building (of $M$), which is homotopy equivalent to a wedge of spheres of a single dimension. 
\newline

In \cite{anphalfcollapse}, we tried to capture the topology of the ends of general nonpositively curved, not necessarily locally symmetric, manifolds $M$ from the geometry of $M$ and  $\widetilde{M}$, showing that many properties of locally symmetric manifolds that could be seen only by doing arithmetic before can actually be seen as purely nonpositive curvature phenomena. For example, we obtained that the lift of the end of $M$ in $\widetilde{M}$ has homology only in dimension less than $n/2$. In other words, it is no arithmetic coincidence that the rational Tits building of a locally symmetric space has dimension less than half the dimension of the space. However, one cannot take this analogy too far and base all aspects of about nonpositively curved manifolds on delicacies of locally symmetric spaces because there are still arithmetic things that are due to arithmetics, such as the rational Tits building being a building, and this is one of the main points of the examples in this paper. 
\newline

Below, $M$ is tame, so it is homeomorphic to the interior of a compact manifold-with-boundary, $(\overline M,\partial\overline M)$ and its universal cover is a (non-copmact) manifold-with-boundary $(\widetilde{\overline M},\partial\widetilde{\overline M})$. We will abuse notation slightly and denote these manifolds-with-boundary by $(M,\partial M)$ and $(\widetilde M,\partial\widetilde M)$, respectively. Note that $\partial\widetilde M\ra\partial M$ is regular cover with covering group $\pi_1M$, so we call it the $\pi_1M$-cover of $\partial M$.     

\begin{theorem}
\label{maintheorem}
For any $0\leq i\leq j<\lfloor n/2\rfloor$ there is a tame, complete, finite volume, Riemannian $n$-manifold $M$ of bounded nonpositive curvature with the property that $\overline H_{k}(\partial\widetilde M)\not=0$ if and only if $i\leq k\leq j$.
\end{theorem}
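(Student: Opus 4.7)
The plan is to construct $M$ by combining, in a single manifold, features coming from cusps of several different ``ranks,'' using the paper's funnel and 2-sided funnel operations as the gluing mechanism.

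First I would handle the single-dimension cases $i=j=k$. The model is a $\Q$-rank $(k+1)$ arithmetic locally symmetric manifold $L$, for example a torsion-free lattice quotient of $\SL_{k+2}(\R)/\SO(k+2)$. The Borel--Serre compactification makes $L$ tame, and by the combination of Borel--Serre and Solomon--Tits the lift $\partial\widetilde L$ is homotopy equivalent to the rational Tits building of $L$, which is a wedge of $k$-spheres; thus $\overline H_\ell(\partial\widetilde L)\neq 0$ iff $\ell=k$. Multiplying by a closed flat manifold of the appropriate dimension adjusts $\dim L$ to any desired $n$ without changing the reduced homology of $\partial\widetilde M$, since the flat factor has contractible universal cover.

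For the general range $i<j$, I would construct a single end that sees all ranks from $i+1$ to $j+1$ simultaneously. Starting from a truncated rank-$(j+1)$ model, I would attach a sequence of funnels, one for each $k\in[i,j-1]$, where the $k$-th funnel is a warped product $N_k\times_{\phi_k}[0,\infty)$ with cross-section $N_k$ modelled on a rank-$(k+1)$ cusp and a convex warping profile $\phi_k\to 0$ at the cusp. The gluings are performed along common collars using the 2-sided funnel construction so that the warping profiles match smoothly. The resulting assembly should have bounded nonpositive curvature (by the standard warped product curvature formulas applied to convex $\phi_k$), finite volume (since $\int\phi_k^{n-1}\,dt<\infty$ for exponentially decaying profiles), and should be tame by construction.

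The main work is then the homology computation. Decomposing $\partial\widetilde M$ via Mayer--Vietoris along the funnel attachment interfaces, each attached rank-$(k+1)$ funnel contributes a piece whose $\pi_1 M$-cover is homotopy equivalent to the Tits building associated to that rank, hence a wedge of $k$-spheres contributing $\overline H_k$; the ambient rank-$(j+1)$ piece contributes $\overline H_j$; and one must check that the connecting maps neither cancel these contributions nor introduce new homology outside $[i,j]$. This last verification is the main obstacle: one must show that the ``mixed-rank rational Tits building'' emerging from the construction has reduced homology concentrated in $[i,j]$ and no further, which requires a careful analysis of the gluing interfaces. The dimension bound $j<\lfloor n/2\rfloor$ is consistent both with the main theorem of \cite{anphalfcollapse} and with the natural inequality $\dim L\geq 2r$ (where $r$ is the $\Q$-rank) that constrains which ranks can fit in a given dimension $n$.
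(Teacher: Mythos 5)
There is a genuine gap, and you've named it yourself: the ``main verification'' that the homology of the glued boundary is concentrated exactly in $[i,j]$ is left as an acknowledged obstacle rather than a carried-out argument. The assertion that each attached rank-$(k+1)$ funnel contributes a piece of $\partial\widetilde M$ that is a wedge of $k$-spheres is not justified and is in fact false in the setting of this paper: the $\pi_1M$-cover of a local piece is governed by the action of the \emph{whole} fundamental group, not just the local subgroup, and the Solomon--Tits identification of $\partial\widetilde L$ with the rational Tits building is an arithmetic fact about locally symmetric spaces, not something one gets for free from a warped-product funnel attached to a bigger manifold. Indeed, one of the paper's points (see the acknowledgement and Remark after Theorem~\ref{maintheorem}) is that for these examples $\partial\widetilde M$ is \emph{not} homotopy equivalent to a building; a ``mixed-rank rational Tits building'' is not a defined object, and the Mayer--Vietoris analysis of the cover along the attachment interfaces is precisely the content that would constitute a proof. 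Without it, you have only a picture. There is also a soft spot in the $i=j$ case: if you use $\SL_{k+2}(\mathbb Z)$-type lattices, $\dim L$ grows quadratically in $k$ and can exceed $n$ even when $j=k<\lfloor n/2\rfloor$ (e.g.\ $k=2$, $n=6$), so you would need a different family --- say products of $k+1$ noncompact hyperbolic surfaces, which have $\Q$-rank $k+1$ and dimension $2(k+1)\le n$ --- though this is a repairable choice issue rather than a conceptual one.

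The paper's actual route is genuinely different and avoids all of the above. It never invokes higher-rank arithmetic locally symmetric spaces or Solomon--Tits. Instead it first proves Proposition~\ref{special case}: there is, for each even $2m$, a manifold $M_{2m}$ with $\overline H_k(\partial\widetilde M_{2m})\neq 0$ for all $k<m$, built inductively from a twice-punctured torus by performing codimension-two surgery (``connect sum along $\mathbb T^{n-2}$'') between a hyperbolic building block $N_n$ and the product $M_{n-2}\times M_2$; the metric smoothing reduces to a funnel/2-sided funnel of revolution. Then the band $[0,m-1]$ is shifted to $[i,j]$ by crossing with $i$ noncompact surfaces (each cross with a surface raises the band by one, via the join/product formula $\partial\widetilde{(M\times N)}\sim\partial\widetilde M * \partial\widetilde N$), and the ambient dimension is raised to $n$ by crossing with circles (which preserve the band). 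This buys two things your approach does not: the homology computation becomes a clean suspension/join argument rather than a delicate Mayer--Vietoris over ill-specified interfaces, and the whole construction stays in the world of hyperbolic manifolds, surfaces, and products --- no arithmetic input is required.
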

In fact, in our examples, $\partial\widetilde M$ is homotopy equivalent to a union of wedges of spheres of dimensions ranging from $i$ to $j$.
\begin{remark}
One can show (see section 10.4 of \cite{anphalfcollapse}) that $$\overline H_*(\partial\widetilde M)\cong H^{n-1-*}(B\pi_1M;\mathbb Z\pi_1M),$$ so as an algebraic corollary, $\pi_1M$ is not a duality group if $j>i$.
\end{remark} 
\bigskip


\noindent
\textbf{The construction} is done inductively and the main idea is to assemble nonpositively curved spaces like products of hyperbolic manifolds with cusps via codimension $2$ surgery along totally geodesic submanifolds. As usual, one needs to smooth out the metric around the places where surgery is done, but in this case, one can do the smoothing of the metric blind folded and with both hands tied behind one's back. 
This is because at each step, we choose suitable manifolds $M_1^k$ and $M_2^k$, each of which has an open set that is isometric to $\mathbb{T}^{k-2}\times \mathbb{D}^2$, where $\mathbb{T}^{k-2}$ is the flat square torus. Then we remove $\mathbb{T}^{k-2}\times\mathbb{D}_{\varepsilon}^2$
 from each $M_i$ and glue the resulting spaces together along the boundary preserving the product structure on $\mathbb{T}^{k-2}\times \partial\mathbb{D}^2_\varepsilon$ to obtain a new manifold $M$ whose metric is singular on $\mathbb{T}^{k-2}\times \partial\mathbb{D}^2_\varepsilon$. Since the gluing is an isometry on the first factor, the singularity of the metric lies in the second factor, which is the double of $(\mathbb{D}^2-\mathbb{D}^2_\varepsilon)$ along $\partial\mathbb{D}^2_\varepsilon$. To smooth out this singularity, replace this double by a ``funnel" that is the surface of revolution generated by the curve $\alpha$ in Figure \ref{graph}, which clearly has nonpositive Gaussian curvature. Thus, we obtain a bounded nonpositively curved manifold $M$ whose ends correspond to those of $M_1$ and $M_2$ and therefore have finite volume.
\newline

\begin{figure}
\label{graph}
\centering
\includegraphics[scale=0.07, angle = 90]{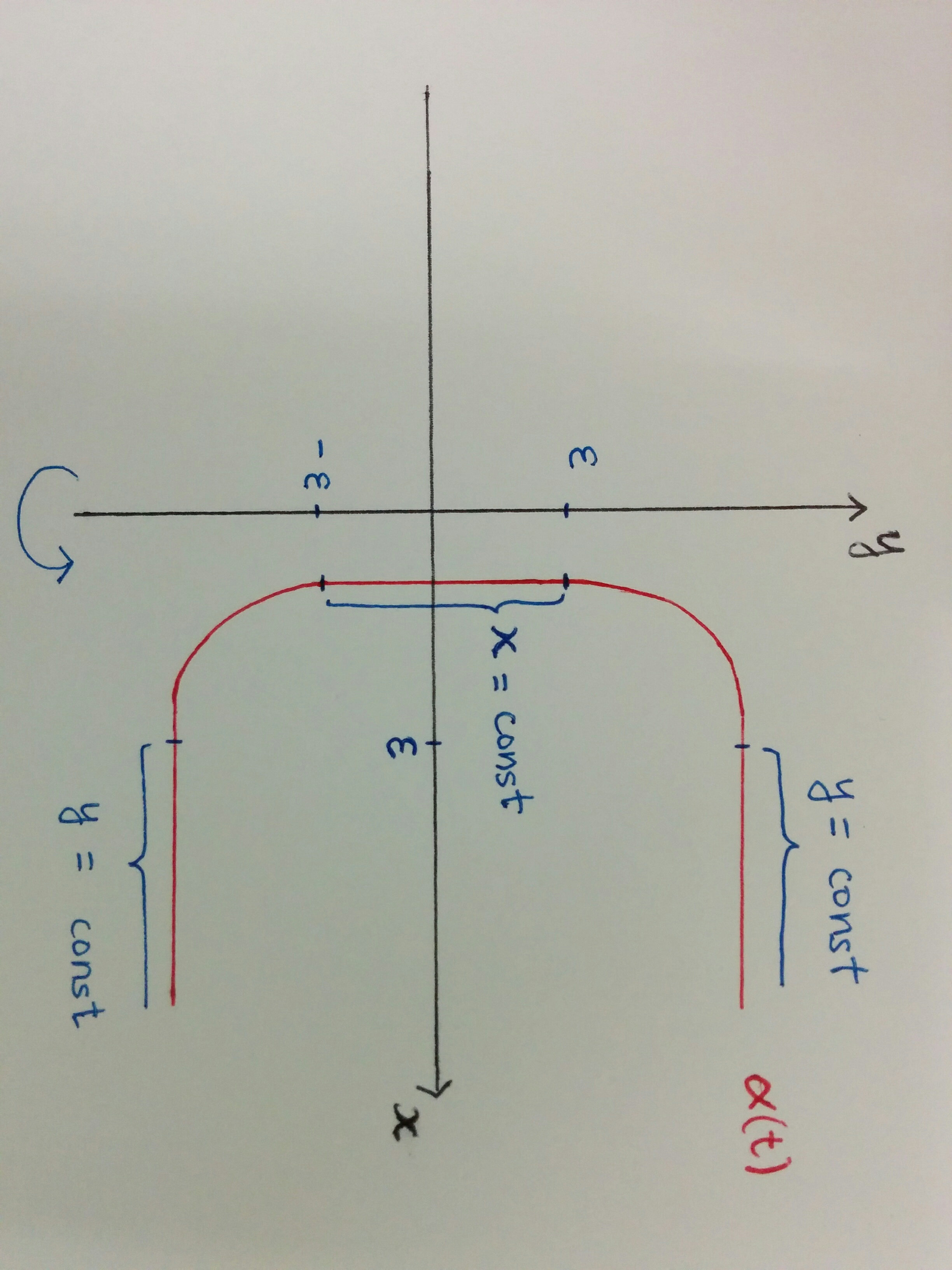}
\caption{}
\end{figure}

We illustrate the simplest, nontrivial case here. The general case will be treated in the body of the paper.
\newline

\noindent
\textbf{An example.} We construct $M$ by taking two manifolds $M_1$ and $M_2$ with isometric totally geodesic submanifolds $T_1$ and $T_2$ (respectively) and gluing the complement of an $\varepsilon$-neighborhood of $T_1$ to the complement of an $\varepsilon$-neighborhood of $T_2$. 
\newline

Let $M_1 = S\times S$ be the product of two copies of a punctured torus. Take a complete hyperbolic metric on $S$ with finite area and let $a$ be a geodesic loop in $S$. Modify the metric smoothly on a regular neighborhood of $a$ in $S$ 
and rescale it if necessary to make it a product $(-1,1)\times \mathbb{S}^1$ without creating positive curvature on $S$. Give $M_1$ the product of this metric. Then $T_1 := a\times a$ is a flat, square 2-torus and has a neighborhood isometric to the product $\mathbb{D}^2\times T_1$. 
\newline

Let $M_2$ be obtained by taking a finite volume, complete, hyperbolic $4$-manifold $H$ with at least three torus-cusps $C_1$, $C_2$ and $C_3$, truncating $C_2$ and $C_3$ and gluing  $\partial C_2$ to $\partial C_3$ via an affine diffeomorphism. Assume for simplicity that the cross sections of each of these cusps are homothetic to the flat, square, 3-torus $\mathbb{T}^3$, so that the gluing can be done via an isometry and gives $M_2$ a bounded nonpositively curved metric. This is standard but we will explain it in the next section. In fact, one can make it so that the metric on $M_2$ is a product $(-1,1)\times \mathbb{T}^3$ on a neighborhood of where the gluing takes place. Now, there is a square $2$-torus $T_2$ factor in $\mathbb{T}^3$, so $T_2$ has a neighborhood isometric to the product $\mathbb{D}^2\times T_2$. 
\newline


Let $M$ be obtained by gluing the complement of the $\varepsilon$-neighborhood of $T_1$ to the complement of the $\varepsilon$-neighborhood of $T_2$ along the boundaries. 
After smoothing out the metric as explained above, we obtain a finite volume, bounded nonpositively curved manifold $M$ with two kinds of cusps, one corresponding to the end of $M_1$, and the other corresponding to the cusp $C_1$ of $M_2$. 
In this example, $\partial\widetilde{M}$ is homotopically equivalent to a graph $\Sigma$ with infinitely many components, each component either contractible or of infinite type (homotopy equivalent to an  infinite wedge of circles). The first kind of cusp looks like a 2-dimensional flat sector from afar and is responsible for the infinite type components in $\Sigma$. The second kind looks like a ray from afar and contributes the contractible components in $\Sigma$.  
\newline

All the simplifying assumptions made above can be taken care of in general when no such assumptions are made. This is dealt with in the rest of the paper and is not difficult. 
\newline

\begin{figure}
\label{funnel}
\centering
\includegraphics[scale=0.42]{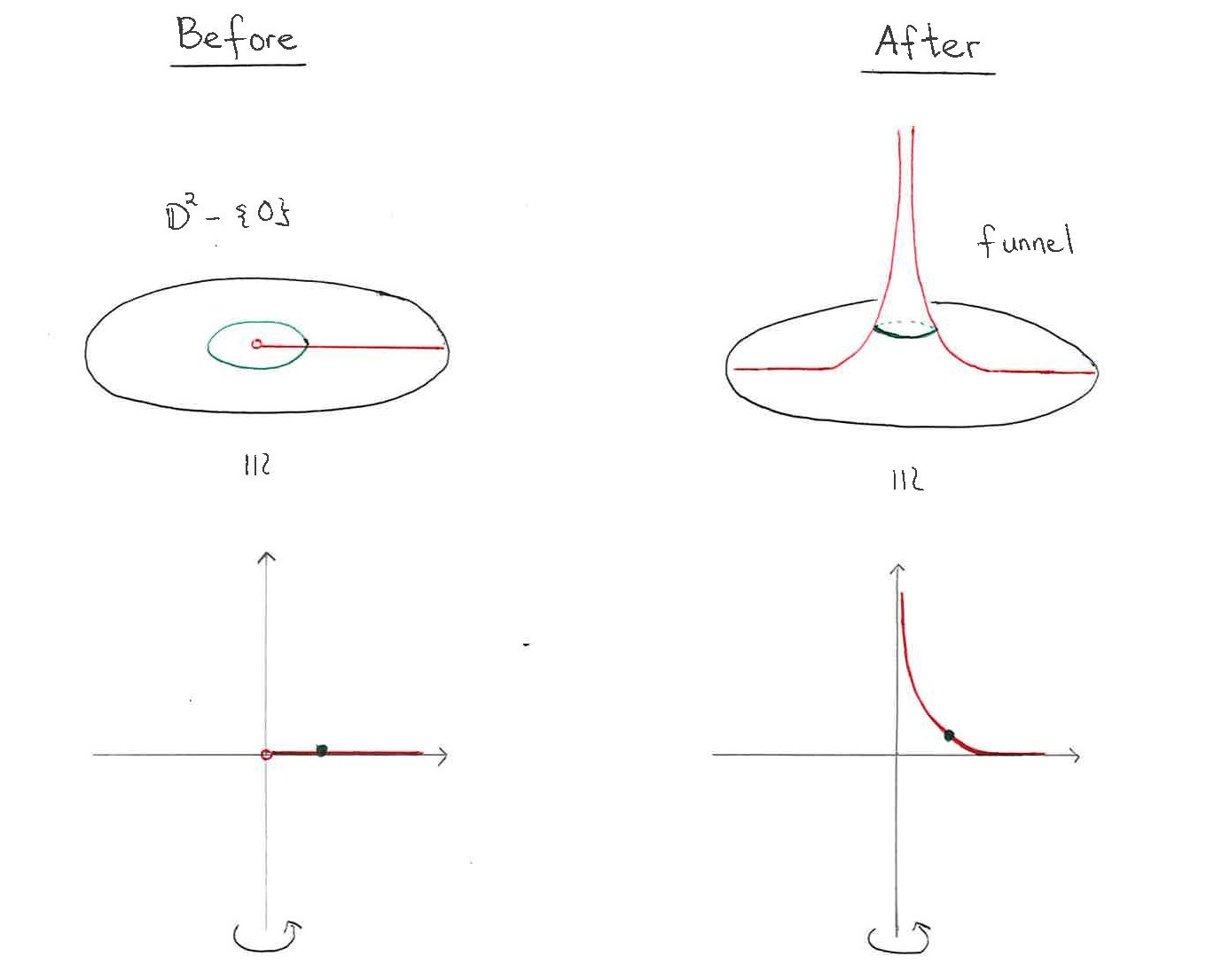}
\caption{}
\end{figure}

A simpler construction that gives a manifold very similar to the manifold $M$ above can be obtained by taking $(M_1-T_1)$ and stretching out the metric in a neighborhood of $T_1$ to make it complete and have finite volume without creating positive curvature. Since the metric on $M_1$ is a product $\mathbb{D}^2\times T_1$, this can be achieved if one can stretch out the metric on $(\mathbb{D}^2-\{0\})$ to obtain a complete, bounded nonpositively curved metric with finite area. This clearly can be done and is illustrated in Figure 2. 
This example is a good example but we did not discuss it above because it does not illustrate every step in the construction given in this paper. But we would like to note that this is a counterexample to a conjecture of Farb on geometric rank 1 manifolds and we will discuss this in Subsection \ref{farbconjecture}.

\begin{acknowledgement}
The second author would like thank Bena Tshishiku for asking her whether $\partial\widetilde{M}$ should be homotopy equivalent to a building.
\end{acknowledgement}

\section{Preliminaries}
As mentioned in the introduction, the construction involves assembling nonpositively curved manifolds containing totally geodesic tori of low codimension. We now describe one way to obtain such manifolds.

\subsection{The building blocks $N_n$ (Hyperbolic straightjackets)}
Start with a complete, finite volume, connected, hyperbolic $n$-manifold $H_n$. After passing to a finite cover, if necessary,  we may assume that $H_n$ has at least three cusps, at least two of which (called $C_+$ and $C_-$) are homeomorphic to $\mathbb T^{n-1}\times(0,\infty)$. Then, the manifold $H_n\setminus(C_+\cup C_-)$ has two boundary components $\partial C_+\cong\mathbb T^{n-1}\cong\partial C_-$. Moreover, the induced metrics on $\partial C_+$ and $\partial C_-$ are flat. Now, let $N_n=(H_n\setminus(C_+\cup C_-))/\partial C_+\sim\partial C_-$ be a manifold obtained by gluing the boundaries $\partial C_+$ and $\partial C_-$ by an affine diffeomorphism.

\begin{proposition}\label{buildingblocks}
For any $r>0$, the manifold $N_n$ has a complete, finite volume, Riemannian metric of bounded non-positive curvature in which a regular neighborhood of $\partial C_+$ is isometric to $\mathbb T^{n-1}\times(-r,r)$, where $\mathbb T^{n-1}$ is a square flat torus.
\end{proposition}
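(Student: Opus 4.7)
The idea is to modify the hyperbolic metric on $H_n$ in neighborhoods of the cusps $C_+$ and $C_-$ so that each cusp ends in a flat product collar $T^{n-1}\times[0,r]$ over a square flat torus; the identification $\partial C_+\sim\partial C_-$ then becomes an affine isometry and the two collars assemble into the required product neighborhood. In horospherical coordinates each cusp is $T^{n-1}\times(0,\infty)$ with metric $dt^2+e^{-2t}g_\pm$, where $g_\pm$ is the flat cross-section metric.

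First I would replace the warping factor $e^{-t}$ by a smooth convex function $f(t)$ equal to $e^{-t}$ for $t\le T_0$ and to a small constant $c$ for $t\ge T_0+\delta$; taking $T_0$ large and $\delta$ small makes $|f'|\le e^{-T_0}$ through the transition, so a convex interpolation between $(f,f')=(e^{-T_0},-e^{-T_0})$ and $(c,0)$ exists. For the warped product $dt^2+f(t)^2 g_\pm$ the radial sectional curvatures $-f''/f$ are non-positive by convexity, and the tangential curvatures $-(f')^2/f^2$ are automatically non-positive, so this modification introduces no positive curvature and ends in a flat collar $dt^2+c^2g_\pm$.

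Next I would attach a further interval carrying a metric $dt^2+A(t)$, where $A(t)$ is a smooth curve of flat Riemannian metrics on $T^{n-1}$ going from $c^2 g_\pm$ to a preassigned square flat metric $g_\square$. Choosing $A(t)$ to travel along a geodesic of the non-positively curved symmetric space $P_{n-1}=\mathrm{GL}(n-1,\R)/\mathrm{O}(n-1)$ of positive-definite symmetric forms, and smoothing at the endpoints so that $A$ is locally constant, yields a non-positively curved metric on the extension; this reflects the standard fact that the corresponding left-invariant metric on $T^{n-1}\rtimes\R$ realises it as a totally geodesic submanifold of a higher-rank symmetric space of noncompact type. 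After this, both ends carry a flat product collar of length at least $r$ over $(T^{n-1},g_\square)$.

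I then discard what is left of the original cusps $C_\pm$ and glue the two collars by the identity of $(T^{n-1},g_\square)$, which is an affine diffeomorphism; the resulting Riemannian metric on $N_n$ is smooth and the collars of length $r$ on either side assemble into the neighborhood $T^{n-1}\times(-r,r)$. Completeness and finite volume are inherited from $H_n$ since all modifications happen on compact subsets and the remaining cusps of $H_n$ are untouched, and curvatures are bounded because the modifications are smooth on compact intervals. The main obstacle is the shape-adjustment step: one must verify carefully that warped-type metrics built from geodesics in $P_{n-1}$ are non-positively curved and that the local smoothing required to glue onto constant endpoints does not create positive curvature.
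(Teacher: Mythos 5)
Your outline is right in broad strokes---modify the cusp metrics to end in a flat product collar over the square torus, then glue by an isometry---and your first step (replacing the warping factor $e^{-t}$ by a convex $f$ that becomes a small constant, using the warped-product curvature formulas) is essentially the paper's argument in the case where both cusp cross-sections are already square. But the second step, interpolating \emph{after} flattening by attaching a block $dt^2+A(t)$ with $A$ a path of flat metrics running from $c^2g_\pm$ to $g_\square$ and \emph{locally constant at both ends}, cannot be made nonpositively curved, and the obstruction is not a delicate verification but an elementary impossibility.

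For a metric $dt^2+A_{ij}(t)\,dx^i dx^j$ one computes $R(\partial_t,X,\partial_t,X)=-\tfrac12\ddot A(X,X)+\tfrac14(\dot A A^{-1}\dot A)(X,X)$. Since $\dot A A^{-1}\dot A$ is always positive semidefinite, nonpositivity of these radial curvatures forces $\ddot A\ge \tfrac12\dot A A^{-1}\dot A\ge 0$ as a matrix inequality. If $\dot A$ vanishes at both ends of the interpolating interval (which it must, for a $C^2$ match with the constant collars on either side), then $\int \ddot A\,dt=0$ together with $\ddot A\ge 0$ forces $\ddot A\equiv 0$ and hence $A$ constant---so the only nonpositively curved interpolation of this shape between flat collars is the trivial one. (There is also a second constraint: the Gauss equation gives $R(X,Y,X,Y)=-\tfrac14\bigl(\dot A(X,X)\dot A(Y,Y)-\dot A(X,Y)^2\bigr)$, which requires $\dot A$ to be semidefinite, so even along a genuine geodesic of $P_{n-1}$ one would need the velocity to have a definite sign.) This is exactly why the paper performs the cross-section interpolation \emph{first}, inside the cusp, using a metric of the form $e^{-2t}\bigl(h(t)g_0+(1-h(t))g_1\bigr)+dt^2$: the exponential factor $e^{-2t}$ contributes $-1$ to the curvature, and if the interpolation is done slowly enough (taking the transition length $l$ large, as in Lemma~2.2 of Aravinda--Farrell) this negative background absorbs the curvature introduced by varying the cross-section. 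Only then does one truncate and flatten. Reversing the order, as you propose, removes the negative background exactly when you need it.
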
  


First, note in the case when $\partial C_+$ and $\partial C_-$ are square, flat tori and the affine diffeomorphism is an isometry,  this is not hard. The hyperbolic metric near $\partial C_+$ or $\partial C_-$ is a warped product and has the form
\[ g_{\text{hyp}} = e^{-2t}g_0 + dt^2,\]
where $g_0$ is a square, flat metric on $\mathbb{T}^{n-1}$ and for some $a$, the slice $t= a$ corresponds to where $\partial C_+$ or $\partial C_-$ is. So around where $\partial C_+$ and $\partial C_-$ are identified,  the metric, after reparametrizing $t$ via a shift by $a$, is 
\[e^{2|t|-2a}g_0 + dt^2\]
on $\mathbb{T}^{n-1}\times[-1,1]$, which is not smooth at $t =0$. But one can replace the warping function $e^{2|t|-2a}$ by a smooth, convex function that, for some small enough $\varepsilon$, agrees with $e^{2|t|-2a}$ outside $(-2\varepsilon,2\varepsilon)$ and that is equal to a positive constant on $(-\varepsilon,\varepsilon)$. Change the range\footnote{In other words, we replace the cylinder $(-\varepsilon,\varepsilon)\times\mathbb{T}^{n-1}$ by the cylinder $(-r,r)\times\mathbb{T}^{n-1}$.} $(-\varepsilon, \varepsilon)$ of $t$-parameter  to $(-r, r)$ but keep the metric otherwise the same to get a desired metric. The fact that the resulting metric has nonpositive curvature is a direct application of the Bishop-O'Neil formula (\cite{bishoponeil}). The relevant point is:
\newline

\textit{If one warps a nonpositively curved metric by a convex function, one gets a nonpositively curved metric.}
\newline

In the general case, the main point is to first interpolate between the square flat metric $g_0$ on $\mathbb T^{n-1}$ and another flat metric $g_1$ on $T^{n-1}$ so that the problem reduces to the above. That is, consider the following metric $g$ on $\mathbb T^{n-1}\times[0,\infty)$.
\[ g = e^{-2t}(h(t)g_0 + (1-h(t))g_1) +dt^2,\]
for some smooth function $h\colon [0,\infty) \rightarrow [0,1]$ such that $h(t) = 0$ when $t$ is close to $0$ and $h(t) = 1$ when $t>l$, for some $l$. One can pick $l$ large enough and an appropriate $h$ so that $g$ has nonpositive curvature as shown in Lemma 2.2 of \cite{aravindafarrell}. Truncate that cusp at $t = a> l$ and apply the above special case to get the desired metric. 


\subsection{A special case}
We will first prove Theorem \ref{maintheorem} in the special case when $n$ is even, $i=0$ and $j=n/2-1$. This is done by inductively constructing manifolds $M_n$ satisfying (\ref{range}) in Proposition \ref{special case}, below. In order to facilitate the induction, the manifolds $M_n$ need to have the additional properties (\ref{ends}), (\ref{flat}), and (\ref{connected}).
\begin{proposition}\label{special case}
For even $n$, there is 
a tame, complete, finite volume, $n$-manifold $M_n$ of bounded non-positive curvature so that
\begin{enumerate}
\item
\label{range}
$\overline H_k(\partial\widetilde M_n)\not=0$ for $k<n/2$, 
\item
\label{ends}
$M_n$ has at least two ends,
\item
\label{flat}
$M_n$ contains an isometrically embedded $T:=\mathbb T^{n-1}\times(-1,1)$, where $\mathbb T^{n-1}=(\mathbb{S}^1)^{n-1}$ is a square flat torus of injectivity radius $1$, and
\item
\label{connected}
$M_n\setminus T$ is connected. 
\end{enumerate} 
\end{proposition}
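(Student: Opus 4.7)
The plan is to induct on the even integer $n$, exhibiting $M_n$ at each stage as a codimension-$2$ surgery of two pieces: an inductively built piece and a hyperbolic straightjacket from Proposition \ref{buildingblocks}.

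For the base case $n=2$, I would start with a finite-volume hyperbolic surface $H_2$ having at least four cusps and apply Proposition \ref{buildingblocks} (with $r=1$, rescaled so the identified circle has injectivity radius $1$) to produce $M_2=N_2$. By construction $M_2$ contains the required flat cylinder $T=\mathbb{S}^1\times(-1,1)$ and, since only two of the $\geq 4$ cusps were glued, retains at least two ends. Removing $T$ yields $H_2$ minus two slightly enlarged cusps, which is connected, and $\partial\widetilde{M_2}$ is a disjoint union of horocycles, so $\overline H_0(\partial\widetilde{M_2})\not=0$.

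For the inductive step, assume $M_{n-2}$ exists, and let $\Sigma$ be a cusped hyperbolic surface with a closed geodesic $a$, modified without introducing positive curvature to be isometric to $\mathbb{S}^1\times(-1,1)$ near $a$. I would take the first piece to be $A:=M_{n-2}\times\Sigma$; the slabs $\mathbb{T}^{n-3}\times(-1,1)\subset M_{n-2}$ and $\mathbb{S}^1\times(-1,1)\subset\Sigma$ then combine into an isometric $\mathbb{T}^{n-2}\times\mathbb{D}^2$ chart in $A$. The second piece will be $B:=N_n$, with Proposition \ref{buildingblocks}'s parameter $r$ chosen large enough that the flat slab $\mathbb{T}^{n-1}\times(-r,r)\subset B$ contains both a $\mathbb{T}^{n-2}\times\mathbb{D}^2$ chart and a disjoint isometric copy of $\mathbb{T}^{n-1}\times(-1,1)$. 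After rescaling so the two $\mathbb{T}^{n-2}$-cores agree, I would pick small $\varepsilon$, excise $\mathbb{T}^{n-2}\times\mathbb{D}^2_\varepsilon$ from each of $A$ and $B$, glue the resulting boundaries via a product-preserving isometry, and smooth the singular metric in the $\mathbb{D}^2$-direction using the funnel of Figure \ref{graph}. The Bishop-O'Neil warped product principle then gives bounded nonpositive curvature on $M_n$, which is also complete and of finite volume. Properties (\ref{ends}), (\ref{flat}) and (\ref{connected}) should follow directly: the ends of $M_n$ are those of $A$ and $B$ together, the isometric $\mathbb{T}^{n-1}\times(-1,1)$ reserved inside $B$ survives the surgery, and $M_n\setminus T$ stays connected because the surgery tube lies elsewhere.

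The main work is in property (\ref{range}). Because $\widetilde M_{n-2}$ and $\widetilde\Sigma$ are contractible, $\partial\widetilde A$ should be homotopy equivalent to the homotopy pushout of $\partial\widetilde{M_{n-2}}\leftarrow\partial\widetilde{M_{n-2}}\times\partial\widetilde\Sigma\rightarrow\partial\widetilde\Sigma$, i.e.\ to the join $\partial\widetilde{M_{n-2}}\ast\partial\widetilde\Sigma$, whose reduced homology satisfies the shift-by-one Künneth formula. Combined with the inductive bound $\overline H_i(\partial\widetilde{M_{n-2}})\not=0$ for $i<n/2-1$ and the fact that $\partial\widetilde\Sigma$ is a disjoint union of lines, this gives $\overline H_k(\partial\widetilde A)\not=0$ for $1\leq k\leq n/2-1$. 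Since $\partial\widetilde B$ is a disjoint union of Euclidean cusp lifts, $\overline H_0(\partial\widetilde B)\not=0$. A Mayer-Vietoris argument on $\partial\widetilde{M_n}$, decomposed along the lifts of the surgery locus, should then combine these contributions to give $\overline H_k(\partial\widetilde{M_n})\not=0$ for every $0\leq k<n/2$. The delicate point, and the main anticipated obstacle, is verifying that the boundary maps in this Mayer-Vietoris sequence do not kill the classes contributed by $A$ and $B$; this requires understanding how the preimage of the surgery tube sits inside each cusp lift, and in particular that each such preimage is an $\mathbb{R}^{n-1}$-like slab whose inclusion into the ambient cusp lift is not surjective in the relevant homological degrees.
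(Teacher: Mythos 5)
Your construction is essentially the paper's construction: you glue $N_n$ to $M_{n-2}\times(\text{cusped surface})$ along a $\mathbb T^{n-2}\times\mathbb D^2$ chart via a two-sided funnel. The only small deviation is the base case, where you take $M_2=N_2$ rather than the twice-punctured torus the paper uses; either works, though the paper's choice makes the path $b$ joining two distinct cusps very explicit, and that path is used later. Properties (\ref{ends}), (\ref{flat}), (\ref{connected}) you handle essentially as the paper does.

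The genuine gap is exactly where you flag it: the verification of property (\ref{range}). You propose a Mayer--Vietoris argument on $\partial\widetilde M_n$ and note that "the delicate point is verifying that the boundary maps do not kill the classes contributed by $A$ and $B$," but you do not resolve this, and resolving it is really the entire content of the homology computation. Moreover, a Mayer--Vietoris decomposition of $\partial\widetilde M_n$ along the preimage of the surgery tube is not a decomposition into $\partial\widetilde A$ and $\partial\widetilde B$: the pieces of $\partial\widetilde M_n$ are $\pi_1(M_n)$-covers of the ends, not $\pi_1(A)$- or $\pi_1(B)$-covers, so the inductive information about $\overline H_*(\partial\widetilde A)$ and $\overline H_*(\partial\widetilde B)$ does not plug in directly. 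The paper sidesteps all of this with a lifting argument instead. For a class in degree $k+1\geq 2$, take a \emph{connected} cycle $z$ in $H_k(\partial\widetilde M_{n-2})$ with $k\geq 1$ and form the suspension $\Sigma z=z*\{b^+,b^-\}$; since $z$ is connected and positive-dimensional, $\Sigma z$ is simply connected, so its map to $\partial M_n$ lifts to $\partial\widetilde M_n$. For degree $1$, the suspension is a circle and simple connectivity fails, so the paper instead chooses the $0$-cycle $z$ to be the boundary of a lift of a path $\beta$ in $M_{n-2}\setminus T$ joining two ends; then $\Sigma z$ bounds $\beta\times b$ inside $M_n$ (crucially $\beta$ avoids $T$, so $\beta\times b$ avoids the surgery locus), hence $\Sigma z$ is nullhomotopic in $M_n$ and lifts. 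Note that this is where the inductive hypothesis (\ref{connected}) earns its keep; your proposal carries (\ref{connected}) along but never uses it in the proof of (\ref{range}), which is a sign that the argument is incomplete. So the construction matches the paper, but the homology argument would need to be replaced by (or completed into) something like the paper's suspension-and-lift argument, including the separate treatment of the degree $1$ case.
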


\subsection{The general case} Theorem \ref{maintheorem} follows from Proposition \ref{special case} by taking products with circles and non-compact surfaces. The key to showing this is the following product formula. 

\subsubsection{\textbf{Product formula}}
If $M$ and $N$ are tame, aspherical manifolds, then one has the following product formula
\begin{equation}
\label{product}
\partial\widetilde{(M\times N)}\sim\partial\widetilde M*
\partial\widetilde N,
\end{equation}
where the symbol $\sim$ denotes homotopy equivalence.
\begin{remark}
This follows from
\begin{equation*}
\begin{array}{ccccc}
\partial\widetilde M*\partial\widetilde N&=&\partial\widetilde M\times Cone(\partial\widetilde N)&\bigcup_{\partial\widetilde M\times\partial\widetilde N}&Cone(\partial\widetilde M)\times\partial\widetilde N,\\
&&&&\\
\partial(\widetilde M\times\widetilde N)&=&\partial\widetilde M\times \widetilde N&\bigcup_{\partial\widetilde M\times\partial\widetilde N}&\widetilde M\times\partial\widetilde N.
\end{array}
\end{equation*}
\end{remark} 

\subsubsection{\textbf{Shifting dimensions via products with circles and surfaces}} Note that for a {\it non-compact} surface $\Sigma$ the cover $\partial\widetilde\Sigma$ is homotopy equivalent to an infinite union of points, which we will write as $\partial\widetilde\Sigma\sim\vee_{i=1}^{\infty}S^0$. Therefore, $\partial(\widetilde{M\times\Sigma})\sim\partial\widetilde M*(\vee_{i=1}^{\infty}S^0)\sim\vee_{i=1}^{\infty}(\partial\widetilde M*S^0)$. So 
\begin{eqnarray}
\label{surface}
\overline H_*(\partial(\widetilde{M\times\Sigma}))&\cong&\bigoplus_{i=1}^{\infty}\overline H_{*-1}(\partial\widetilde M).
\end{eqnarray}
It is also clear that $\partial(\widetilde {M\times S^1})\sim\partial\widetilde M$ so we have
\begin{eqnarray}
\label{circle}
\overline H_*(\partial(\widetilde{M\times S^1}))&\cong&\overline H_*(\partial\widetilde M).
\end{eqnarray}
\subsubsection{\textbf{Proof of Theorem \ref{maintheorem} given Proposition \ref{special case}}}
The proposition gives a $2(j-i+1)$-dimensional manifold $M_{2(j+i-1)}$ whose homology $\overline H_k(\partial\widetilde M_{2(j+1-i)})$ doesn't vanish precisely in the band of dimensions $0\leq k\leq j-i$. Crossing with $i$ non-compact surfaces shifts this band into the desired dimension range $i\leq k\leq j$ (by formula (\ref{surface})) and then crossing with $n-2j-2$ circles raises the dimension of the manifold to $n$ without affecting the band (by formula (\ref{circle})). So, the resulting manifold
\begin{equation}
M=M_{2(j+1-i)}\times(\Sigma)^i\times(S^1)^{n-2j-2},
\end{equation}
satisfies the conclusions of Theorem \ref{maintheorem}.



\section{Proof of Proposition \ref{special case}}
The manifolds $M_n$ are constructed inductively, as follows. 
\subsection{Base case} Topologically, the base case $M_2$ is a twice-punctured torus. Start with a hyperbolic metric on $M_2$. In this metric the two punctures appear as cusps. Let $b$ be a geodesic\footnote{It is not important that $b$ is a geodesic. We could take any path.} that starts in one cusp and ends in the other cusp, and $a$ a non-separating closed geodesic loop that does not intersect $b$. Let length$(a) \geq 2$ and modify the metric so that it is a flat cylinder on an $1$-neighborhood of $a$, hyperbolic outside of a compact set, and still non-positively curved.\footnote{We can do this without changing the length of $a$.} It is easy to see that $M_2$ with this metric satisfies the conditions in the proposition. 
\subsection{Inductive step}
Suppose we have constructed $M_{n-2}$. 
We need to build $M_n$. 
Look at $M_{n-2}\times M_2$. 
It contains an isometrically embedded 
\begin{eqnarray*}
\mathbb T^{n-3}\times(-1,1)\times a\times(-1,1)&\cong&\mathbb T^{n-2}\times (-1,1)^2\\
&\supset&\mathbb T^{n-2}\times \mathbb D^2.
\end{eqnarray*}
On the other hand, suppose that $N_n$ is an $n$-dimensional ``building block" described above, i.e. a manifold obtained from a hyperbolic manifold by gluing a pair of cusps together so that they give an isometrically embedded copy of  
\begin{eqnarray*}
\mathbb T^{n-1}\times(-1,3)&\supset&\mathbb T^{n-2}\times \mathbb S^1\times\left((-1,1)\coprod(1,3)\right)\\
&\supset&\left(\mathbb T^{n-2}\times\mathbb D^2\right)\coprod\left(\mathbb T^{n-1}\times(1,3)\right).
\end{eqnarray*}
The ``$\mathbb T^{n-2}\times\mathbb D^2$'' is used in codimension two surgery, and the ``$\mathbb T^{n-1}\times(1,3)$'' implies that the resulting manifold $M_n$ will have property (\ref{flat}), which let us continue the induction. 
Also recall that $N_n$ has at least one cusp that is not glued to anything. We claim that the manifold 
\begin{equation}
M_n:=\left[N_n\setminus(\mathbb T^{n-2}\times\mathbb D^2)\right]\bigcup_{\mathbb T^{n-2}\times S^1}\left[(M_{n-2}\times M_2)\setminus(\mathbb T^{n-2}\times \mathbb D^2)\right]
\end{equation}
obtained by taking the ``connect sum along $\mathbb T^{n-2}$''
has a complete, finite volume metric of bounded nonpositive curvature. We explain this in the following subsection. 

\subsection{Flat, codimension two surgery in nonpositive curvature}
Suppose $M$ and $N$ are complete, finite volume manifolds of bounded nonpositive curvature and $S\subset M$ a totally geodesic submanifold. Suppose further that a regular neighborhood of $S$ is {\it isometric} to $S\times \mathbb D^2$.

\bigskip
\noindent
\textbf{Cusps.} The manifold $M\setminus S$ has a complete, finite volume, nonpositively curved metric of bounded nonpositive curvature obtained by replacing $S\times (\mathbb D^2 - \{0\})$ by $S\times\funnel$, where a \emph{funnel} is defined as follows. 


\begin{definition}[Funnel]
Let $f \colon (0,1] \rightarrow \R$ be a smooth, strictly convex, non-negative function that satisfies the following properties.
\begin{itemize}
\item[(i)] $f(x) = 0$ when $x\geq 1/2$.
\item[(ii)] $f(x) \rightarrow\infty$ as $x\rightarrow 0$.
\item[(iii)] $\displaystyle{\int_0^1 f(x)dx <\infty}$.
\end{itemize}
Let \underline{$\funnel$} be the surface of revolution obtained by rotating the graph of $f(x)$ around the $y$-axis. Then it is diffeomorphic to $\mathbb D^2-\{0\}$ but has negative Gaussian curvature (because $f(x)$ is strictly convex) and finite area (because of condition (iii) above). See Figure 2. 
\end{definition}

\begin{remark}
We use this in the alternative construction (subsection \ref{variant}) which works in special cases, but do not need it for the proof of Theorem \ref{maintheorem}. 
\end{remark}

\begin{definition}[2-Sided Funnel]
A \underline{2-sided funnel} is the surface of revolution obtained by rotating in the curve $\alpha(t)$, where $\alpha(t)$ is a smooth curve defined as in Figure \ref{graph}, around the $y$-axis. 
A 2-sided funnel is diffeomorphic to $\mathbb{S}^1\times(-1,1)$ but has negative Gaussian curvature. 
\end{definition}

\noindent
\textbf{Codimension two connect sum.} If $N$ also contains an isometrically embedded copy of $S\times \mathbb D^2$, then the {\it $S$-connect sum} 
\begin{equation}
\label{connectsum}
M\#_{S}N:=\left[M\setminus (S\times \mathbb D^2)\right]\cup_{S\times \mathbb S^1}\left[N\setminus(S\times \mathbb D^2)\right]
\end{equation}
has a complete, finite volume metric of bounded nonpositive curvature. After cutting out the regular neighborhoods $S\times \mathbb D^2$ from both manifolds, the metric is obtained by inserting a tube that looks topologically like $S\times(\mathbb S^1\times(0,1))$ but metrically looks like $S\times\{\mbox{two sided funnel}\}$. 
\begin{remark}
In the notation of equation (\ref{connectsum}),   
$$M_n=N_n\#_{\mathbb T^{n-2}}(M_{n-2}\times M_2),$$ so $M_n$ has a complete finite volume metric of bounded nonpositive curvature. 
\end{remark}

\section{Properties of the manifold $M_n$}
The manifold $M_n$ contains the isometrically embedded $T:=\mathbb T^{n-1}\times(1,3)$, which shows property (\ref{flat}). The space $N_n\setminus T$ is connected (because it is homotopy equivalent to the original connected hyperbolic manifold $H_n$ we had before we glued two of its cusps together) and the product $M_{n-2}\times M_2$ is connected (the factors $M_{n-2}$ and $M_2$ are connected because they satisfy property (\ref{connected})) so the space 
\begin{eqnarray*}
M_n\setminus T
&=&[(N_n\setminus T)\setminus(\mathbb T^{n-2}\times \mathbb D^2)]\bigcup_{\mathbb T^{n-2}\times \mathbb S^1}[(M_{n-2}\times M_2)\setminus(\mathbb T^{n-2}\times \mathbb D^2)]
\end{eqnarray*} 
obtained via the codimension two surgery is also connected. This proves property (\ref{connected}).
\newline


Since both $N_n$ and $M_{n-2}\times M_2$ have ends, the manifold $M_n$ has at least two ends. This shows property (\ref{ends}). It also implies that $\partial\widetilde M_n$ has at least two components, so
$$
\overline H_0(\partial\widetilde M_n)\not=0.
$$ 
It remains to establish the positive dimensional cases of property (\ref{range}).
\subsection{Computing $H_{>1}(\partial\widetilde M_n)$} 

Next, let $z$ be a {\it connected} homology cycle representing a nontrivial homology class in $H_{k}(\partial\widetilde{M}_{n-2})$ for $0<k<n/2-1$. Let $\tilde b$ be a lift of the path connecting the two ends of the twice punctured torus, and $b^+$ and $b^-$ its endpoints. Look at the suspended cycle $\Sigma z=z*\{b^+,b^-\}$. Since $z$ is connected, the suspended cycle $\Sigma z$ is simply connected. Therefore, a map $\Sigma z\ra\partial\widetilde M_{n-2}*\partial\widetilde M_{2}\sim\partial(\widetilde{M_{n-2}\times M_2})$ which represents the non-trivial $(k+1)$-homology class $[\Sigma z]\in H_{k+1}(\partial(\widetilde{M_{n-2}\times M_2}))$ lifts to a component of $\partial\widetilde M_n$. So, for $1<k+1<n/2$ we have
$$
H_{k+1}(\partial\widetilde M_n)\not=0.
$$ 

\subsection{Computing $H_1(\partial\widetilde M_n)$} Since $M_{n-2}$ has two ends and $M_{n-2}\setminus T$ is connected, we can find a path $\beta:[0,1]\ra M_{n-2}\setminus T$ connecting two different ends of $M_{n-2}$. Let $z=\partial\tilde\beta\in\overline{H}_0(\partial\widetilde M_{n-2})$ be the non-trivial zero cycle obtained as the boundary of a lift $\tilde\beta$ of $\beta$. Then, the image of $\Sigma z=\{\beta^+,\beta^-\}*\{b^+,b^-\}$ is contractible in $M_n$ because it bounds $\beta\times b$. 
Therefore, in this case the non-trivial homology cycle $[\Sigma z]\in H_1(\partial(\widetilde{M_{n-2}\times M_2}))$ also lifts to a cycle in a component of $\partial\widetilde M_n$, showing that 
$$
H_1(\partial\widetilde M_n)\not=0.
$$ 
In summary, we have shown that $\overline H_k(\partial\widetilde M_n)\not=0$ for $k<n/2$. This proves property (\ref{range}), finishes the proof of Proposition \ref{special case}, and thus also the proof of Theorem \ref{maintheorem}.
  


\section{Miscellaneous}

\subsection{\label{variant}A variant for narrow bands that only uses surfaces}
Note that the regular neighborhood of $a\times a$ inside $M_2\times M_2$ is isometric to $a\times a\times D^2_{\epsilon}$. Replacing $D^2_{\epsilon}$ by a ``funnel'' metric on $D^2_{\epsilon}\setminus\{0\}$, we get a complete, finite volume metric of bounded nonpositive curvature on $$M'_4:=(M_2\times M_2)\setminus(a
\times a).$$ 
The arguments in the previous section apply to show that $\overline H_0(\partial\widetilde M'_4)\not=0$ and $H_1(\partial\widetilde M'_4)\not=0$. Taking products of the manifold $M'_4$ with itself and using the product formula (\ref{product}), we get manifolds $(M'_4)^m$ of dimension $4m$ which have $\overline H_k(\partial\widetilde{(M'_4)^m})\not=0$ precisely when $m-1\leq k\leq 2m-1$. 

\begin{remark}
Taking products with circles $S^1$ and non-compact surfaces $M_2$ we get in this way manifolds $M:=(M'_4)^m\times(M_2)^p\times(S^1)^q$ of dimension $\dim M=4m+2p+q$ for which $\overline H_{k}(\partial\widetilde M)$ is non-zero in a band of dimensions $m-1+p\leq k\leq 2m-1+p$. 
\end{remark}

\subsection{Putting gaps in the band?} A remaining question is whether one can build examples where there are gaps in the set of dimensions in which homology occurs. This is already potentially possible in dimension $n=6$. In this case, the question is whether one can simultaneously have $\overline H_0(\partial\widetilde M)\not=0,H_1(\partial\widetilde M)=0,$ and $H_2(\partial\widetilde M)\not=0$.

\subsection{Large scale geometry}
Denote by $[n]$ the set with $n$ elements. It is easy to see that the main construction gives manifolds that on a large scale look like the Euclidean cone on a complex $C_k$, where $C_k$ is defined inductively via $C_0=[2],C_1=([2]*[2])\coprod[n_4],\dots, C_{k}=(C_{k-1}*[2])\coprod[n_{2k}]$ where $n_{2k}$ is the number of ends of the $2k$ dimensional building block $N_{2k}$.


\subsection{\label{farbconjecture}Geometric rank-1 manifolds with $\pi_1$ generated by a cusp}
Once upon the time, there was a conjecture that said the following.

\begin{conjecture}[Farb]\label{Farbconjecture}
Let $M$ be a tame, complete, finite volume $n$-manifold of bounded nonpositive curvature. Suppose $M$ has geometric rank one. Then there is a loop in $M$ that cannot be homotoped to leave every compact set. 
\end{conjecture}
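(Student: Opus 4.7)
The plan is to refute Farb's conjecture by verifying that the manifold $M=(M_2\times M_2)\setminus(a\times a)$ with the funnel metric from Subsection \ref{variant} is a counterexample. The manifold is, by construction, tame, complete, finite volume, and of bounded nonpositive curvature. For the geometric rank-one hypothesis, the observation is that $\widetilde M$ contains rank-one axes---a lift of a generic geodesic in one factor $M_2$ that crosses $a$ transversely cannot bound a flat half-plane, since the only $2$-flats in $\widetilde{M_2\times M_2}$ came from products of lifts of $a$ and have been destroyed by the funnel.

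The main content is to show that every loop in $M$ is freely homotopic to a loop leaving every compact set, i.e., that every conjugacy class in $\pi_1(M)$ has a representative in the peripheral subgroup of an end. The ends of $M$ contribute peripheral subgroups: the new end $T\times\funnel$ gives $\langle a_1,a_2,\mu\rangle\cong\mathbb{Z}^3$ (where $\mu$ is the meridian of $T:=a\times a$), and the original end of $M_2\times M_2$ gives subgroups $\langle[a_1,b_1]\rangle\times\langle a_2,b_2\rangle$ and $\langle a_1,b_1\rangle\times\langle[a_2,b_2]\rangle$ from the two factor cusps. Together these already generate $\pi_1(M)$.

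The key algebraic input is that after removing $T$, the product commutations $[a_1,b_2]=1$ and $[b_1,a_2]=1$ of $\pi_1(M_2\times M_2)=F_2\times F_2$ no longer hold in $\pi_1(M)$; the $2$-tori $a\times b$ and $b\times a$ that used to witness these commutations are now punctured along circles, and consequently $[a_1,b_2]$ and $[b_1,a_2]$ become conjugates of powers of $\mu$. These new relations tie the new cusp's peripheral subgroup to the others and enable rewriting ``mixed'' elements such as $a_1b_2$ as products of peripheral pieces.

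The main obstacle is promoting ``peripheral subgroups generate $\pi_1(M)$'' to ``every conjugacy class lies in a peripheral subgroup'': this requires a normal-form argument for the associated graph-of-groups decomposition (a Bass-Serre analysis) to show that no hyperbolic conjugacy classes survive. The hope, which must be carefully justified, is that the web of new relations introduced by puncturing the coordinate tori is rich enough to eliminate all such classes, so that every loop can be freely homotoped into one of the cusps above and thus pushed to infinity---contradicting the conjecture's conclusion.
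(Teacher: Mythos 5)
Your overall plan is right — exhibit the specific funnel manifold as the counterexample, verify rank one, then show every loop pushes out to an end — and your choice of manifold (the variant $(M_2\times M_2)\setminus(a\times a)$, which differs from the paper's $(S\times S)\setminus(a\times a)$ only in using the twice-punctured torus) is fine. But the main step, showing every loop escapes to infinity, has a genuine gap and takes the wrong route.

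The gap: you frame the goal as ``every conjugacy class lies in a peripheral subgroup'' and propose a Bass--Serre/normal-form argument to eliminate hyperbolic classes, which you yourself flag as unjustified. In fact the manifold $W=(S_1\times S_2)\setminus(T_1\times T_2)$ has no evident graph-of-groups decomposition handed to you by the surgery (removing a codimension-two submanifold is not an amalgam/HNN splitting), so the Bass--Serre machinery does not apply out of the box. More to the point, the structure you would have to uncover in $\pi_1(W)$ is strictly harder than what is actually needed. The key idea you are missing is that the wedge $S_1\vee S_2=(S_1\times\{s_2\})\cup(\{s_1\}\times S_2)$, which sits inside a single boundary component $\partial(S_1\times S_2)$, already maps $\pi_1$-onto $\pi_1(W)$. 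Thus that single peripheral subgroup is all of $\pi_1(W)$, and every loop is based-homotopic to a loop carried by the boundary, hence can be pushed arbitrarily far out the end. The proof of $\pi_1$-ontoness is purely elementary: a loop $\gamma=(\gamma_1,\gamma_2)$ in $W$ has disjoint sets of times where $\gamma_1$ crosses $T_1$ and $\gamma_2$ crosses $T_2$; decompose $\gamma$ into subpaths each avoiding $T_1$ in the first coordinate or $T_2$ in the second, close them up to loops using that the $T_i$ are non-separating, and note each such loop splits as a product of loops in the two wedge factors. No normal forms, no analysis of the new relations $[a_1,b_2]$ and $[b_1,a_2]$ as conjugates of powers of the meridian.

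A smaller issue: your justification of geometric rank one is not correct as stated. It is not true that the only $2$-flats in $\widetilde{M_2\times M_2}$ come from products of lifts of $a$; any product of geodesics from the two factors is a $2$-flat, so there are far more of them, and the claim that a generic geodesic ``cannot bound a flat half-plane'' after surgery would need a real argument. The paper instead invokes the Rank Rigidity dichotomy: a finite-volume nonpositively curved manifold of higher rank is a locally symmetric space or a Riemannian product, and $W$ is visibly neither (its two ends have non-isomorphic, non-product cross-sections), so it has rank one. That is the clean way to get the hypothesis.
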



This is known to be true in dimension $\leq 3$. Below we show that the manifold $(M_1-T_1)$ from the introduction is a $4$-dimensional counterexample to this conjecture, and also build higher dimensional counterexamples.  

\begin{proposition}
There is a counterexample to Conjecture \ref{Farbconjecture} for each $n\geq 4$.
\end{proposition}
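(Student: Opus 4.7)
The plan is to exhibit, for each $n\ge 4$, a counterexample to Conjecture~\ref{Farbconjecture}: a tame, complete, finite-volume, bounded nonpositively curved $n$-manifold $M$ which has geometric rank one and in which every loop can be homotoped to leave every compact set. For $n=4$, $M$ will be $(M_1-T_1)$ from the introduction; for $n\ge 5$ it will be an $n$-dimensional analogue built by essentially the same codimension-two funnel surgery on a product.

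For $n=4$, the funnel metric on $M_1-T_1$ is already complete, finite-volume, and bounded nonpositively curved. To exhibit a rank-one axis, I would take $g=(h_1,h_2)\in\pi_1(S)\times\pi_1(S)$ with $h_1,h_2$ generic hyperbolic elements of $\pi_1(S)$. The axis of $g$ in $\widetilde M_1=\widetilde S\times\widetilde S$ bounds a single (up to scaling) transverse parallel Jacobi field coming from the product structure, generating a flat half-plane $\widetilde\alpha_1\times\widetilde\alpha_2$ where $\widetilde\alpha_i$ is the axis of $h_i$. Since the $\pi_1(S)$-translates of $\widetilde a$ are dense in the geodesics of $\widetilde S$, this half-plane generically crosses infinitely many lifts of $T_1=a\times a$, so it is removed in $\widetilde{M_1-T_1}$. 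Hence $g$ becomes a rank-one isometry and $M_1-T_1$ has geometric rank one. For the homotopy condition, $\pi_1(M_1-T_1)$ is generated by $\pi_1(M_1)=\pi_1(S)\times\pi_1(S)$ together with the meridian $\mu$ around $T_1$; the meridian lies in the new funnel cusp by construction, and each generator $\alpha\times 1$ is represented by a loop $\alpha\times\{p\}\subset S\times S$ which is freely homotopic (inside $M_1-T_1$) to $\alpha\times\{p'\}$ for $p'$ arbitrarily deep in the cusp of the second factor---the cusps having been chosen disjoint from $a$. The symmetric argument handles $1\times\beta$, so every loop is homotopic into an end.

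For $n\ge 5$ I would run the same surgery on $M:=S\times N_{n-2}$, where $N_{n-2}$ is the $(n-2)$-dimensional building block from Proposition~\ref{buildingblocks}, with at least one remaining cusp. Then $M$ contains the flat totally geodesic torus $T:=a\times\mathbb T^{n-3}\cong\mathbb T^{n-2}$ sitting inside the product of the flat cylindrical neighborhoods of $a\subset S$ and $\mathbb T^{n-3}\subset N_{n-2}$, and hence $T$ has a tubular neighborhood isometric to $\mathbb T^{n-2}\times\mathbb D^2$. Removing $T$ and inserting a funnel in the $\mathbb D^2$ direction produces the desired $M-T$. The two verifications carry over: using a product hyperbolic element $(h_S,h_Y)$ with $h_Y\in\pi_1(N_{n-2})$ a generic loxodromic element, the flat half-plane through the axis crosses lifts of $T$ and is destroyed, so the axis is rank one; and each generator of $\pi_1(S)\times\pi_1(N_{n-2})$ is pushed into a pre-existing end of $M$ by pushing into the remaining cusp of the opposite factor, while the meridian lies in the new funnel cusp.

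The main obstacle is the rank-one verification, since the axis of a product element bounds a flat half-plane coming from the ambient product structure. The crux is to show that once $T$ is cut out, every flat half-plane through such an axis in $\widetilde{M-T}$ must cross removed lifts of $T$ and so no longer exists. In the $4$-dimensional case this follows from the density of $\pi_1(S)$-translates of $\widetilde a$ in $\widetilde S$; in higher dimensions it follows analogously from the density of lifts of $\mathbb T^{n-3}$ inside $\widetilde{N_{n-2}}$. One must also check that a product axis admits no transverse parallel Jacobi field beyond the one coming from the product structure, which is immediate provided each factor has rank one in the appropriate sense.
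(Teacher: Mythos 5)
Your construction is essentially the same as the paper's: the $4$-dimensional case is $(M_1-T_1)$ from the introduction, and the higher-dimensional cases are $S\times N_{n-2}$ with $T=a\times\mathbb T^{n-3}$ removed and a funnel inserted. However, both halves of your verification diverge from the paper and have gaps.

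For geometric rank one, the paper simply invokes rank rigidity: $W$ has more than one end and the two ends have no common factor, so $W$ is not a product; it is plainly not locally symmetric; hence it has rank one. You instead try to argue directly that a generic product axis has no flat half-plane once $T$ is deleted. This is a genuinely different (and potentially more informative) route, but as written it reasons inside $\widetilde M_1=\widetilde S\times\widetilde S$, whereas what must be controlled is $\widetilde{M_1-T_1}$ equipped with the \emph{new} funnel metric. These are different spaces: $\pi_1(M_1-T_1)$ is larger, the universal cover is not a subspace of $\widetilde S\times\widetilde S$, and the parallel Jacobi fields along a lifted axis must be analyzed for the modified metric, not the ambient product metric. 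You flag this in your last paragraph but do not resolve it, so the rank-one half is incomplete as a proof.

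The more serious gap is in showing that every loop can be homotoped to leave every compact set. You push the product generators $\alpha\times 1$ and $1\times\beta$ into the end coming from $\partial(S\times S)$, and observe that the meridian $\mu$ lies in the new funnel end. But these are two \emph{different} ends of $W$, and the set of (free) homotopy classes representable near a fixed end is a union of conjugacy classes of a fixed subgroup, not all of $\pi_1(W)$; in particular, the union over two ends need not be closed under products. A word like $(\alpha\times 1)\,\mu\,(1\times\beta)$ is not visibly pushable to either end from your generator-by-generator argument. The paper handles exactly this with the key Lemma: the composition $(S_1\vee S_2)\hookrightarrow\partial(S_1\times S_2)\hookrightarrow (S_1\times S_2)-(T_1\times T_2)$ is $\pi_1$-onto, so the \emph{single} end corresponding to $\partial(S_1\times S_2)$ already carries the whole fundamental group; in particular the meridian itself is the commutator $[b_1,b_2]$ of loops in $S_1\vee S_2$ and therefore also lies in the image of that one end. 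That surjectivity statement is the essential missing ingredient in your proposal.
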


We will drop the index ``1" as we longer need it. First, note that the manifold $W := M-T$ has geometric rank $1$ because it is neither a locally symmetric space, nor a product.\footnote{$W$ is not a product of two non-compact manifolds because it has more than one end. It is not a product of a non-compact manifold and a compact manifold, because its two ends do not have a common factor: The end cross sections are $\mathbb T^3$ and the irreducible graph manifold $((\mathbb T^2-D^2)\times S^1)\bigcup_{S^1\times S^1}(S^1\times(\mathbb T^2-D^2))$.} 
Thus, we only need to show that all loops in $W$ can be homotoped to leave all compact sets. This is true because of the following lemma.

\begin{lemma}
Let $(S_1,\partial S_1)$ and $(S_2,\partial S_2)$ be compact, connected manifolds-with-boundary and pick basepoints $s_i\in\partial S_i$. Suppose that $T_i\subset (S_i-\partial S_i)$ are compact non-separating hypersurfaces. Let $S_1\vee S_2 = (S_1\times\{s_2\})\cup(\{s_1\}\times S_2)$. Then the composition
$$
(S_1\vee S_2)\hookrightarrow\partial(S_1\times S_2)\hookrightarrow (S_1\times S_2)-(T_1\times T_2),
$$
is $\pi_1$-onto.
\end{lemma}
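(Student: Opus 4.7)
The plan is to cover $U := (S_1\times S_2) - (T_1\times T_2)$ by two open subsets on which loops are visibly homotopic to ones in $S_1\vee S_2$. Since $(x_1,x_2)\in T_1\times T_2$ iff $x_1\in T_1$ \emph{and} $x_2\in T_2$, we have the decomposition
\[
U = A\cup B,\qquad A := (S_1- T_1)\times S_2,\quad B := S_1\times (S_2 - T_2).
\]
The sets $A$ and $B$ are open in $U$ (each $T_i$ is closed in $S_i$ since it is compact), and their intersection $A\cap B = (S_1- T_1)\times (S_2- T_2)$ is path-connected because each $T_i$ is a non-separating hypersurface in the connected manifold $S_i$. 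So Seifert--van Kampen applies and shows that $\pi_1(U,(s_1,s_2))$ is generated by the images of $\pi_1(A)$ and $\pi_1(B)$.

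Next I would factor any based loop in $A$ through $S_1\vee S_2$. Write such a loop as $\gamma(t)=(\alpha(t),\beta(t))$ with $\alpha\colon [0,1]\to S_1- T_1$ and $\beta\colon [0,1]\to S_2$. The standard homotopy on $[0,1]^2$ from the diagonal to the ``bottom-then-right'' corner path yields a homotopy in $A$ from $\gamma$ to the concatenation $(\alpha(t),s_2)\cdot(s_1,\beta(t))$. The first piece lies in $S_1\times\{s_2\}$ and the second in $\{s_1\}\times S_2$, so the concatenation is a loop in $S_1\vee S_2$. Both $S_1\times\{s_2\}$ and $\{s_1\}\times S_2$ do lie in $U$: since $s_i\in\partial S_i$ is disjoint from $T_i\subset S_i-\partial S_i$, no point of these slices meets $T_1\times T_2$. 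The same argument handles loops in $B$.

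Combining these, every Seifert--van Kampen generator of $\pi_1(U)$ lies in the image of $\pi_1(S_1\vee S_2)\to\pi_1(U)$, establishing surjectivity. The only conceptual input is spotting the decomposition $U=A\cup B$; once one has that, the main step is the standard product-of-loops homotopy, and the only thing to verify is that the wedge $S_1\vee S_2$ actually sits inside $U$, which follows from the basepoints being on the boundary while each $T_i$ is in the interior. In that sense there is no real obstacle beyond the initial observation.
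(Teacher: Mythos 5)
Your proof is correct. The key observation---that $U := (S_1\times S_2) - (T_1\times T_2)$ is covered by $A = (S_1-T_1)\times S_2$ and $B = S_1\times(S_2-T_2)$, with $A\cap B = (S_1-T_1)\times(S_2-T_2)$ path-connected by the non-separating hypothesis---is exactly what drives the paper's argument too, where it appears implicitly in the remark that the times at which $\gamma_1$ crosses $T_1$ are disjoint from the times at which $\gamma_2$ crosses $T_2$. Where you diverge is the packaging: the paper carries out the loop decomposition by hand (cut $\gamma$ into subpaths each staying in $A$ or in $B$, close each subpath to a based loop using the non-separating property, then split each loop as $\gamma_1^{(k)}\cdot\gamma_2^{(k)}$), whereas you delegate that bookkeeping to Seifert--van Kampen. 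Your route is arguably tidier: the paper's step ``Using the fact that the $T_i$ are non-separating, we can homotope $\gamma$ to be a concatenation of such loops'' is precisely what Seifert--van Kampen formalizes via path-connectedness of $A\cap B$, and you verify all the needed hypotheses (openness of $A$, $B$, path-connectedness of the pieces, and that the basepoint $(s_1,s_2)$ lies in $A\cap B$ because $s_i\in\partial S_i$ while $T_i$ lies in the interior). Either proof works; yours trades the paper's elementary-but-slightly-informal concatenation for a clean citation of a standard theorem.
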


\begin{proof}
If $\gamma(t)=(\gamma_1(t),\gamma_2(t))$ is a loop in $(S_1\times S_2)-(T_1\times T_2)$, then the times at which $\gamma_1$ crosses $T_1$ are disjoint from the times at which $\gamma_2$ crosses $T_2$. So, one can decompose $\gamma$ as concatenation $\gamma=\gamma^{(1)}\cdot\dots\cdot\gamma^{(r)}$ of {\it paths} where for each $\gamma^{(k)}$ either the first coordinate path $\gamma^{(k)}_1$ never crosses $T_1$ or the second coordinate path $\gamma^{(k)}_2$ never crosses $T_2$. Using the fact that the $T_i$ are non-separating, we can homotope $\gamma$ to be a concatenation of such {\it loops} (all based at $(s_1,s_2)$). Finally, each such loop $\gamma^{(k)}$ is homotopic to $\gamma^{(k)}_1\cdot\gamma^{(k)}_2$, so we are done. 
\end{proof}

\begin{remark}
Since $T$ has codimension 2 in $M$, there is a loop $\gamma$ in $M$ that goes around $T$. One might wonder how $\gamma$ can be a product of elements in $S_1\vee S_2$. 
Let $b_i$ be a loop in $S_i$ based at $s_i$ that intersects transversely with $T_i$ precisely once. We claim that $\gamma = [b_1,b_2] = b_1b_2b_1^{-1}b_2^{-1}$. To see this, observe that $T': = b_1\times b_2$ is an embedded torus in $M$ that intersects $T$ transversely at exactly one point $p$. So $\gamma$ can be taken to be a loop in $T'$ that goes around $p$. Removing $T$ from $M$ results in removing $p$ from $T'$. Since $T'-\{p\}$ is a punctured torus, the loop $\gamma$, which goes around the puncture, must be the commutator of the generators $b_1$ and $b_2$.
\newline
\end{remark}

\noindent
\textbf{Higher dimensional counterexamples} can be constructed in a very similar manner. In dimension $n\geq 4$, let $S_1$ be the punctured torus as before, and let $T_1=a_1$. Let $S_2$ be the building block $N_{n-2}$ and let $T_2$ be $\mathbb{T}^{n-3}$, the square flat torus in $N_{n-2}$ in Proposition \ref{buildingblocks}. 
The manifold $W:=(S_1\times S_2)-(T_1\times T_2)$ is an $n$-dimensional counterexample to Conjecture \ref{Farbconjecture}. It has geometric rank one for the same reasons as in the above example. To see that $\pi_1(W)$ is generated by loops coming from the end of $M$, we apply the above lemma. 

\subsection{A thick-thin conjecture for nonpositively curved manifolds} We would like to suggest the following replacement for Conjecture \ref{Farbconjecture}.
\begin{conjecture}\label{Avramidiconjecture}
Let $M$ be a tame, complete, finite volume $n$-manifold of bounded\footnote{The conjecture is not true without the lower curvature bound. There is a complete, finite volume, negatively curved metric on the product $\Sigma\times\mathbb R$, where $\Sigma$ is a closed surface with genus $g \geq 2$ (\cite{nguyenphannegative}).} nonpositive curvature. Then there is a compact subset $C\subset M$ that cannot be homotoped to leave every compact set.
\end{conjecture}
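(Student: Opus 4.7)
The plan is to take $C$ to be the thick part of $M$ and to obstruct any homotopy by using the low dimension of the thin ends. Concretely, I would first invoke the Margulis lemma for manifolds of bounded nonpositive curvature (Ballmann--Gromov--Schroeder), which crucially relies on the lower curvature bound in the hypothesis, to decompose $M$ into a compact \emph{thick part} $C := M_{\geq\mu}$ (for a Margulis constant $\mu>0$) and finitely many thin ends $E_1,\dots,E_r$, each of which deformation retracts onto a compact nilmanifold $N_i$ of dimension at most $n-1$. This $C$ will be my candidate compact set.

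Next, suppose for contradiction that $C$ can be homotoped to escape every compact set. Then for $K$ large enough to contain $C$ and to separate the thin ends, there is a homotopy $H\colon C\times[0,1]\rightarrow M$ with $H_0=\iota$ and $H_1(C)\subset\bigsqcup_i E_i$. Composing $H_1$ with the retraction of each $E_i$ onto $N_i$ yields a map $f\colon C\rightarrow \bigsqcup_i N_i$ that is homotopic in $M$ to the inclusion $\iota$. Thus $\iota$ factors, up to homotopy in $M$, through a disjoint union of closed aspherical manifolds of dimension strictly less than $n$ whose fundamental groups are virtually nilpotent. The contradiction should come from an invariant carried by $\iota$ that cannot factor through such a target: candidates include a homological class supported by the pair $(C,\partial C)$, or a bounded-cohomology / simplicial-volume obstruction that vanishes on nilmanifolds but is nontrivial for the thick part of any geometric-rank-one piece of $M$.

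The principal difficulty is exactly this last step. The paper's own refutation of Farb's original conjecture shows that $\pi_1 M$ may be generated by subgroups carried by the ends, so a naive $\pi_1$-level argument cannot suffice; one really needs a higher-dimensional witness, and transporting such a witness along a non-proper homotopy is delicate. The footnote's counterexample on $\Sigma\times\R$ shows precisely what can go wrong without the lower curvature bound: there the thin end is all of $\Sigma\times[T,\infty)$, of full dimension $n$, so no dimensional obstruction is available and any compact set simply slides out the end. Any viable proof must therefore use in an essential way that bounded nonpositive curvature forces the thin ends to collapse onto closed submanifolds of dimension $\leq n-1$, not merely to have finite volume, and pair this dimensional constraint with an invariant of the thick part that genuinely detects its presence homologically rather than at the level of loops.
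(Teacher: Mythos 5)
The statement you were asked to address is a conjecture the paper \emph{poses}, not one it proves; there is no proof in the paper to compare against. The paper only (i) cites Pettet--Souto for the locally symmetric case and (ii) checks that its own examples $W=(S_1\times S_2)\setminus(T_1\times T_2)$ are not counterexamples, by a direct homological argument: the closed submanifolds $T_1'\times b_2$ and $b_1\times T_2'$ intersect transversely in one point, and since intersection number is a homotopy invariant neither can be pushed off the other, so they cannot both be homotoped into a small neighborhood of the end. That argument is exactly the sort of ``homological witness'' your sketch says it needs at the final step but does not supply, and it is worth noting it makes no use of a Margulis thick--thin decomposition.

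Beyond the (self-acknowledged) gap at the last step, the first step of your sketch is already wrong in general. For finite-volume manifolds of bounded nonpositive curvature, the Margulis-thin ends do \emph{not} deformation retract onto closed infranilmanifolds of dimension $\leq n-1$. That picture is correct for rank-one cusps such as hyperbolic cusps, but it fails for higher $\Q$-rank locally symmetric spaces and for the manifolds built in this paper: the paper computes $\partial\widetilde M$ to be homotopy equivalent to wedges of spheres in a range of dimensions, not to nilmanifolds, and the end cross-sections include the graph manifold $((\mathbb T^2\setminus D^2)\times S^1)\cup_{S^1\times S^1}(S^1\times(\mathbb T^2\setminus D^2))$ mentioned in the paper, whose fundamental group is not virtually nilpotent. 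The Margulis lemma controls the \emph{local} fundamental group at points of small injectivity radius, not the global topology of the thin part. So the ``target'' through which you want the inclusion $\iota$ to factor is considerably more complicated than a disjoint union of low-dimensional aspherical nilmanifolds, the dimensional constraint you hope to exploit is not available as cheaply as you assume, and the candidate obstructions you list remain candidates --- which is precisely why the conjecture is open.
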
 
Note that this conjecture makes sense (and is most easily stated) for general finite volume manifolds of bounded nonpositive curvature, not just those of geometric rank one. 
The conjecture is known to be true for locally symmetric manifolds $M$ by a result of Pettet and Souto \cite{pettetsouto}.\footnote{Such locally symmetric manifolds contain {\it maximal periodic flat tori} $\mathbb T^r\ra M$, where $r$ is the $\mathbb R$-rank of the locally symmetric space $M$. Pettet and Souto showed these tori cannot be homotoped into the end (even though loops in such a locally symmetric space can always be homotoped into the end whenever the $\mathbb Q$-rank is $\geq 2$).} Therefore, it is enough to understand it for geometric rank one manifolds. 
\newline

Notice that the examples in this paper are not counterexamples to Conjecture \ref{Avramidiconjecture}. To see this, pick an embedded loop $b_i$ in $S_i$ that intersects $T_i$ transversely exactly once. This is possible since the hypersurfaces $T_i$ are non-separating. Now, let $T_i'$ be a parallel copy of $T_i$. We pick it close to $T_i$, so that $b_i$ intersects $T_i'$ transversely at exactly one point $x_i=b_i\cap T_i'$. Then the closed submanifolds $T'_1\times b_2$ and $b_1\times T'_2$ of $W$ intersect transversely at a single point $x_1\times x_2$. Therefore, the interior of $W$ cannot be homotoped into its end, because if there was such a homotopy $h_t:W\ra W$ with $h_0=\id_W$ and $h_1(W)$ contained in a sufficiently small neighborhood of the end of $W$, then we could move $T'_1\times b_2$ via the homotopy $h_t(T'_1\times b_2)$ to be disjoint from $b_1\times T'_2$. This is a contradiction because intersection number is a homological invariant. Therefore, there is no such homotopy. 

\subsection{A $4$-manifold with three isometric, higher rank ends}
Here is another construction of a complete, finite volume $4$-manifold of bounded nonpositive curvature. This manifold has three ends, and each end looks like the end of a product of punctured tori $(\mathbb T^2\setminus\{0\})\times(\mathbb T^2\setminus\{0\})$.

Inside the flat $4$-torus $\mathbb T^4=\mathbb R^4/\mathbb Z^4$, look at the subspaces 
\begin{eqnarray*}
X^1&=&\{t_1=0,t_2=0\}\cup\{t_3=0,t_4=0\},\\ 
X^2&=&\left\{t_1={1\over 3},t_3={1\over 3}\right\}\cup\left\{t_2={1\over 3},t_4={1\over 3}\right\},\\
X^3&=&\left\{t_1={2\over 3},t_4={2\over 3}\right\}\cup\left\{t_2={2\over 3},t_3={2\over 3}\right\}.
\end{eqnarray*}
Each $X^i$ is isometric to a pair of orthogonally intersecting $2$-tori $\mathbb T^2\cup\mathbb T^2$.
Moreover, $X^1, X^2,$ and $X^3$ are disjoint. In fact, for any $\epsilon<1/6$ the $\epsilon$-neighborhoods of these spaces $X^1_{\epsilon},X^2_{\epsilon},X^3_{\epsilon}$ are isometric and disjoint in $\mathbb T^4$.

Now, fix $\epsilon<1/6$ and let $D^2_{\epsilon}$ be an $\epsilon$-disk centered at $0$ in the $2$-torus $\mathbb T^2$. The punctured $2$-torus $\mathbb T^2\setminus\{0\}$ has a complete, finite volume nonpositively curved metric that is flat on a neighborhood of $\mathbb T^2\setminus D^2_{\epsilon}$ and is a funnel metric on $D^2_{\epsilon}\setminus \{0\}$. Thus, the product $(\mathbb T^2\setminus\{0\})\times(\mathbb T^2\setminus\{0\})=\mathbb T^4\setminus X^1$ has a complete, finite volume nonpositively curved metric that is the original flat metric on a neighborhood of $\mathbb T^4\setminus X_{\epsilon}^1$ and a new metric $g_\dagger$ on $X^1_{\epsilon}\setminus X^1$. Since the $\epsilon$-neighborhoods $X^1_{\epsilon},X^2_{\epsilon},$ and $X^3_{\epsilon}$ are disjoint and isometric, we can copy the $g_\dagger$ metric on $X^1_{\epsilon}\setminus X^1$ to $X^2_{\epsilon}\setminus X^2$ and $X^3_{\epsilon}\setminus X^3$ to get a complete, finite volume, nonpositively curved metric on $$
M:=\mathbb T^4\setminus\{X^1\cup X^2\cup X^3\}.
$$
Finally, since the funnel metric had bounded curvature, this metric on $M$ does, as well.
\newline

This example is interesting because we have cut out the obvious intersections that we used earlier to see the that the interior cannot be homotoped into the end. It is also interesting because it has three identical higher rank ends.

\bibliography{moreends}

\providecommand{\bysame}{\leavevmode\hbox to3em{\hrulefill}\thinspace}
\providecommand{\MR}{\relax\ifhmode\unskip\space\fi MR }
\providecommand{\MRhref}[2]{%
  \href{http://www.ams.org/mathscinet-getitem?mr=#1}{#2}
}
\providecommand{\href}[2]{#2}
\begin{thebibliography}{1}

\bibitem{aravindafarrell}
S.~Aravinda and T.~Farrell, \emph{Twisted doubles and nonpositive curvature},
  Bulletin of the London Mathematical Society \textbf{41} (2009), no.~6,
  1053--1059.

\bibitem{anphalfcollapse}
G.~Avramidi and T.~T. Nguyen~Phan, \emph{Half dimensional collapse of ends of
  manifolds of nonpositive curvature}, arXiv preprint arXiv:1608.02185 (2016).

\bibitem{ballmannlectures}
W.~Ballmann, \emph{Lectures on spaces of nonpositive curvature}, vol.~25,
  Birkh{\"a}user, 2012.

\bibitem{bishoponeil}
R.~Bishop and B.~O'Neill, \emph{Manifolds of negative curvature}, Transactions
  of the American Mathematical Society \textbf{145} (1969), 1--49.

\bibitem{nguyenphannegative}
T.~T. Nguyen~Phan, \emph{On finite volume, negatively curved manifolds}, arXiv
  preprint arXiv:1110.4087 (2011).

\bibitem{pettetsouto}
A.~Pettet and J.~Souto, \emph{Periodic maximal flats are not peripheral.},
  Journal of Topology \textbf{7} (2014), no.~2.

\bibitem{zimmer}
R.~Zimmer, \emph{Ergodic theory and semisimple groups}, vol.~81, Springer
  Science \& Business Media, 2013.

\end{thebibliography}
\bibliographystyle{amsplain}

\end{document}